\documentclass[leqno,12pt]{amsart}
\usepackage{url}
\usepackage{latexsym}
\usepackage{amssymb,amsmath,amsfonts}
\usepackage{mathtools}
\usepackage[utf8]{inputenc}
\usepackage[usenames,dvipsnames]{color}

\newtheorem{theorem}{Theorem}[section]

\newtheorem{lemma}[theorem]{Lemma}
\newtheorem{proposition}[theorem]{Proposition}
\newtheorem{corollary}[theorem]{Corollary}
\theoremstyle{definition}
\newtheorem{definition}[theorem]{Definition}
\newtheorem{example}{Example}

\newtheorem{remark}{Remark}

\numberwithin{equation}{section}
\usepackage[left=2.5cm,right=2.5cm,top=1.6cm,bottom=1.6cm]{geometry}

\begin{document}

\title[A viscosity-Halpern hybrid scheme]{A viscosity-Halpern hybrid scheme for countable families of equilibrium and variational inequality problems}

\subjclass[2020]{47H09, 47H05, 47J25, 47J05, 65K15}
\keywords{Equilibrium problem; variational inequality problem; generalized $J_{*}$-nonexpansive mapping; viscosity method; Halpern method; hybrid projection method; countable family; strong convergence}

\maketitle
\begin{center}
 Markjoe O. Uba\\
 \vspace{.1in}
Department of Mathematical Sciences,\\ 
Northern Illinois University,\\ 
DeKalb, IL 60115, USA\\
{\tt markjoeuba@gmail.com}  
\end{center}

\begin{abstract}
Let $C$ be a nonempty closed and convex subset of a uniformly smooth and uniformly convex real Banach space $E$ with dual space $E^{*}$. We introduce a viscosity-Halpern hybrid projection scheme for approximating a common element of the fixed point set of a countable family of generalized nonexpansive-type mappings, the solution sets of countably many variational inequality problems, and the solution sets of countably many equilibrium problems. The method combines a viscosity perturbation generated by a contraction, a Halpern anchor term, equilibrium and variational inequality resolvent steps, and a shrinking generalized projection step. Under monotonicity, continuity, closedness and NST-type assumptions, we prove strong convergence of the generated sequence to the generalized projection of the initial point onto the common solution set. We also give a generalized-projection variational characterization of the selected limit, residual convergence, Hilbert-space specializations, and examples showing that the full countable problem cannot, in general, be recovered from finite truncations.
\end{abstract}

\maketitle

\section{Introduction}

Let $E$ be a real Banach space with topological dual $E^{*}$ and let $C$ be a nonempty closed and convex subset of $E$. The variational inequality problem is concerned with finding a point $x^{*}\in C$ such that
\begin{equation}\label{VIintro}
\langle y-x^{*},Ax^{*}\rangle\geq 0,\qquad \forall y\in C,
\end{equation}
where $A:C\to E^{*}$ is a nonlinear mapping. We denote the set of solutions of \eqref{VIintro} by $VI(C,A)$. Variational inequality problems are important in nonlinear analysis because they include, as special cases, complementarity problems, convex minimization problems, nonlinear operator equations and several equilibrium models.

Another related problem is the equilibrium problem. Let $f:JC\times JC\to \mathbb{R}$ be a bifunction, where $J$ is the normalized duality map. The equilibrium problem considered in this paper is to find a point $x^{*}\in C$ such that
\begin{equation}\label{EPintro}
f(Jx^{*},Jy)\geq 0,\qquad \forall y\in C.
\end{equation}
The set of solutions of \eqref{EPintro} will be denoted by $EP(f)$. This problem provides a useful framework for optimization problems, saddle point problems, variational inequality problems and fixed point problems.

Fixed point theory is another central topic in nonlinear functional analysis. For a self map $T:C\to C$, the fixed point problem is to find $x^{*}\in C$ such that $Tx^{*}=x^{*}$. In Banach spaces, however, one also encounters mappings from a space into its dual. For this reason, the notion of $J$-fixed points was introduced. If $T:C\to E^{*}$, a point $p\in C$ is called a $J$-fixed point of $T$ if $Tp=Jp$. This notion makes it possible to study fixed point-type problems for non-self mappings $T:C\to E^{*}$.

In recent years, hybrid projection methods have been widely used to obtain strong convergence theorems for common solution problems. These methods are especially useful because many nonexpansive-type algorithms in infinite-dimensional spaces yield weak convergence only. In the Banach space setting, the Lyapunov functional
\begin{equation}\label{phi}
\phi(x,y)=\|x\|^{2}-2\langle x,Jy\rangle+\|y\|^{2},\qquad x,y\in E.
\end{equation}
plays the role of the squared norm in Hilbert spaces. In particular, if $E=H$ is a real Hilbert space, then $J=I$ and $\phi(x,y)=\|x-y\|^{2}$.

Hybrid and monotone projection methods for fixed point problems, convex feasibility problems, equilibrium problems, variational inequalities and monotone-type inclusions have been studied extensively; see, for example, \cite{Alber1996,AlberRyazantseva2006,BlumOettli1994,ChidumeIdu2016,ChidumeOtuboEzeaUba2017,Cioranescu1990,CombettesHirstoaga2005,IbarakiTakahashi2007,KamimuraTakahashi2002,KlineamSuantaiTakahashi2012,KohsakaTakahashi2007,NakajoTakahashi2003,QinSu2008,Takahashi2000,TakahashiZembayashi2008,UbaOtuboOnyido2021,ZegeyeShahzad2011,ZegeyeShahzad2014}. In \cite{UbaCarpathian2023}, the authors considered a hybrid scheme for approximating a common element of the set of $J$-fixed points of a countable family of generalized $J_{*}$-nonexpansive maps together with finite families of variational inequality and equilibrium problems. The present work continues this line of research in two directions: we add a viscosity-Halpern regularization term, and we replace the finite equilibrium and variational inequality families by countable families.

The Halpern iteration, introduced by Halpern \cite{Halpern1967}, is a powerful device for forcing strong convergence by combining the current iterate with a fixed anchor. Viscosity methods, introduced and developed by Moudafi \cite{Moudafi2000} and further studied by Xu \cite{Xu2004}, use a contraction to regularize the limiting behavior of a nonexpansive-type process. These two ideas have become standard tools in the approximation of fixed points and solutions of variational inequalities.

The countable setting introduces an additional indexing issue: every component problem must be visited infinitely often while the shrinking projection sets remain nested and convex. More precisely, before computing the equilibrium and variational inequality resolvent steps, we form the viscosity-Halpern regularized point
\[
s_n=J^{-1}\big(\lambda_nJh(x_n)+\sigma_nJ\bar u+(1-\lambda_n-\sigma_n)Jx_n\big),
\]
where $h:C\to C$ is a contraction and $\bar u\in C$ is a fixed anchor. The equilibrium and variational inequality resolvents are then evaluated at $s_n$. The new method therefore combines viscosity perturbation, Halpern anchoring, countable-family resolvent selection and a shrinking generalized projection step.

The main contribution of this paper is a strong convergence theorem for a single iterative scheme which simultaneously treats three countable structures: a countable family of generalized $J_{*}$-nonexpansive mappings, countably many equilibrium problems and countably many variational inequality problems. The countable setting is not merely a change of notation from the finite case. It introduces an indexing challenge: every component problem must be visited infinitely often while the shrinking projection sets remain nonempty, closed, convex and nested. The index-map framework used here resolves this difficulty and permits the equilibrium and variational inequality resolvents to be selected one at a time without losing convergence to the full countable intersection. The paper also studies residual convergence, a generalized-projection variational characterization of the selected limit, and examples showing that finite truncations do not generally recover the countable solution set. When the viscosity and Halpern parameters vanish identically and the equilibrium and variational inequality families are finite, the method reduces to earlier hybrid schemes of the same type.

For clarity, Table~\ref{comparison-table} indicates the position of the present result relative to some related hybrid projection frameworks.
\begin{table}[ht]
\centering
\footnotesize
\caption{Comparison with related hybrid projection frameworks}
\label{comparison-table}
\renewcommand{\arraystretch}{1.18}
\begin{tabular}{lcccc}
\hline
Work & FP family & EP family & VI family & Setting/regularization \\
\hline
Zegeye--Shahzad \cite{ZegeyeShahzad2011,ZegeyeShahzad2014} & finite & finite & finite & hybrid framework \\
Uba et al. \cite{UbaCarpathian2023} & countable & finite & finite & Banach hybrid scheme \\
Uba--Otubo--Onyido \cite{UbaOtuboOnyido2021} & countable & single & absent & Banach hybrid method \\
Present paper & countable & countable & countable & viscosity-Halpern Banach scheme \\
\hline
\end{tabular}
\end{table}

\section{Preliminaries}

Throughout this paper, $E$ will denote a real Banach space with dual space $E^{*}$. The normalized duality map $J:E\to 2^{E^{*}}$ is defined by
\[
Jx:=\{x^{*}\in E^{*}:\langle x,x^{*}\rangle=\|x\|\|x^{*}\|,\ \|x\|=\|x^{*}\|\}.
\]
If $E$ is smooth, strictly convex and reflexive, then $J$ is single-valued, one-to-one and onto. Moreover, if $E$ is uniformly smooth, then $J$ is uniformly continuous on bounded subsets of $E$. In this setting the inverse duality mapping from $E^{*}$ into $E$ is denoted by $J_{*}=J^{-1}$.

Let $E$ be a smooth real Banach space. The Lyapunov functional $\phi:E\times E\to \mathbb{R}$ is defined by \eqref{phi}, and has the following property
\begin{equation}\label{phibounds}
(\|x\|-\|y\|)^2\leq \phi(x,y)\leq (\|x\|+\|y\|)^2,
\end{equation}
for all $x,y\in E$.

\begin{definition}\label{gprojdef}
Let $C$ be a nonempty closed and convex subset of a smooth Banach space $E$. The generalized projection of $x\in E$ onto $C$ is the unique point $\Pi_Cx\in C$ satisfying
\[
\phi(\Pi_Cx,x)=\inf_{y\in C}\phi(y,x),
\]
whenever such a point exists. In a uniformly smooth and uniformly convex Banach space, $\Pi_Cx$ exists and is unique for every nonempty closed and convex subset $C$ and every $x\in E$.
\end{definition}

\begin{definition}\label{Jfixeddef}
Let $T:C\to E^{*}$ be a map. A point $p\in C$ is called a $J$-fixed point of $T$ if
\[
Tp=Jp.
\]
The set of $J$-fixed points of $T$ will be denoted by $F_J(T)$. If $\Gamma$ is a family of mappings from $C$ into $E^{*}$, we write
\[
F_J(\Gamma):=\bigcap_{T\in\Gamma}F_J(T).
\]
\end{definition}

\begin{definition}\label{GenJstar}
A map $T:C\to E^{*}$ is called generalized $J_{*}$-nonexpansive if $F_J(T)\neq\emptyset$ and
\[
\phi(p,(J_{*}\circ T)x)\leq \phi(p,x),\qquad \forall x\in C,\ p\in F_J(T).
\]
\end{definition}

\begin{definition}\label{Jcloseddef}
A map $T:C\to E^{*}$ is called $J_{*}$-closed if $(J_{*}\circ T):C\to E$ is closed; that is, whenever $x_n\to x$ and $(J_{*}\circ T)x_n\to y$, then $(J_{*}\circ T)x=y$.
\end{definition}

\noindent {\bf NST-condition.} Let $\{T_n\}$ and $\Gamma$ be two families of generalized $J_{*}$-nonexpansive maps from $C$ into $E^{*}$ such that
\[
\bigcap_{n=1}^{\infty}F_J(T_n)=F_J(\Gamma)\neq\emptyset.
\]
The sequence $\{T_n\}$ is said to satisfy the NST-condition with $\Gamma$ if for each bounded sequence $\{x_n\}\subset C$,
\[
\|Jx_n-T_nx_n\|\to 0\quad \Longrightarrow\quad \|Jx_n-Tx_n\|\to 0,\qquad \forall T\in\Gamma.
\]

For the equilibrium problem, we assume that a bifunction $f:JC\times JC\to \mathbb{R}$ satisfies:
\begin{itemize}
\item[(A1)] $f(x^{*},x^{*})=0$ for all $x^{*}\in JC$;
\item[(A2)] $f$ is monotone, that is,
\[
f(x^{*},y^{*})+f(y^{*},x^{*})\leq 0,
\]
for all $x^{*},y^{*}\in JC$;
\item[(A3)] for all $x^{*},y^{*},z^{*}\in JC$,
\[
\limsup_{t\downarrow 0}f(tz^{*}+(1-t)x^{*},y^{*})\leq f(x^{*},y^{*});
\]
\item[(A4)] for each $x^{*}\in JC$, $f(x^{*},\cdot)$ is convex and lower semicontinuous.
\end{itemize}

We recall the following lemmas which will be used in the sequel.

\begin{lemma}\label{glemma}
Let $E$ be a uniformly convex Banach space, $r>0$ and $B_r(0)$ be the closed ball of $E$. For any given points $x_1,x_2,\ldots,x_N\in B_r(0)$ and any positive numbers $\lambda_1,\lambda_2,\ldots,\lambda_N$ with $\sum_{i=1}^{N}\lambda_i=1$, there exists a continuous, strictly increasing and convex function $g:[0,2r)\to [0,\infty)$ with $g(0)=0$ such that, for any $i,j\in\{1,2,\ldots,N\}$ with $i<j$,
\[
\left\|\sum_{n=1}^{N}\lambda_nx_n\right\|^2\leq \sum_{n=1}^{N}\lambda_n\|x_n\|^2-\lambda_i\lambda_jg(\|x_i-x_j\|).
\]
\end{lemma}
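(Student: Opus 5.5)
The plan is to reduce the $N$-point inequality to the two-point inequality of Xu for uniformly convex spaces, and then handle the general case by grouping.

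\emph{Step 1 (two points).} I first establish that there is a continuous, strictly increasing, convex $g:[0,2r)\to[0,\infty)$ with $g(0)=0$ such that
\[
\|tx+(1-t)y\|^2\leq t\|x\|^2+(1-t)\|y\|^2-t(1-t)g(\|x-y\|),\qquad x,y\in B_r(0),\ t\in[0,1].
\]
This is Xu's characterization of uniform convexity. I would define
\[
\rho(\tau)=\inf\Big\{\tfrac{t\|x\|^2+(1-t)\|y\|^2-\|tx+(1-t)y\|^2}{t(1-t)}:\ x,y\in B_r(0),\ \|x-y\|\geq\tau,\ t\in(0,1)\Big\}.
\]
The function $\rho$ is nondecreasing with $\rho(0)=0$. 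Uniform convexity, via the modulus of convexity $\delta_E$, gives $\rho(\tau)>0$ for $\tau>0$; the quantitative route is to combine the midpoint estimate $\|\tfrac{x+y}{2}\|\le(1-\delta_E(\|x-y\|/2r))r$ with a standard argument passing from midpoints to arbitrary $t$. I then replace $\rho$ by a continuous, strictly increasing, convex minorant $g$ vanishing at $0$. One candidate is the greatest convex minorant of $\rho$, possibly with $\tfrac{1}{2}$ of it taken and $\min\{\tau^2,\cdot\}$-type adjustments to make it strictly increasing. I expect this step to be the main obstacle. The delicate points are showing that $\rho$ is bounded below by a function that is positive away from $0$ uniformly in $t$, including $t$ near $0$ or $1$, and that the convex minorant stays strictly positive on $(0,2r)$.

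\emph{Step 2 (grouping).} Fix $i<j$ and put $\mu=\lambda_i+\lambda_j$. If $\mu=1$ the claim is exactly Step 1. Otherwise write $\sum_n\lambda_nx_n=\mu z+(1-\mu)w$ with
\[
z=\frac{\lambda_i}{\mu}x_i+\frac{\lambda_j}{\mu}x_j,\qquad w=\sum_{n\neq i,j}\frac{\lambda_n}{1-\mu}x_n .
\]
Both $z$ and $w$ lie in $B_r(0)$.

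\emph{Step 3 (combining).} By convexity of $\|\cdot\|^2$ we have $\|\mu z+(1-\mu)w\|^2\le\mu\|z\|^2+(1-\mu)\|w\|^2$, which simply drops a nonnegative $g$-term. Applying convexity again to $w$ gives $(1-\mu)\|w\|^2\le\sum_{n\ne i,j}\lambda_n\|x_n\|^2$. Step 1 applied to $z$ gives
\[
\mu\|z\|^2\le\lambda_i\|x_i\|^2+\lambda_j\|x_j\|^2-\frac{\lambda_i\lambda_j}{\mu}g(\|x_i-x_j\|).
\]
Since $0<\mu\le1$ and $g\ge0$, we have $\frac{\lambda_i\lambda_j}{\mu}g\ge\lambda_i\lambda_jg$. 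Adding the three estimates yields the stated inequality. The same $g$ works for every pair $i<j$ and every choice of points and weights, since $g$ depends only on $r$ and $E$.
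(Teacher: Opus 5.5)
The paper does not prove this lemma. It is only recalled as a known tool (a standard consequence of Xu's inequality), so there is no in-paper argument to compare against.

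Your argument is correct.
\begin{itemize}
\item Step~1 is exactly Xu's characterization of uniform convexity with $p=2$, where the weight $W_2(t)=t^2(1-t)+t(1-t)^2$ reduces to $t(1-t)$. You can cite it instead of rebuilding $g$.
\item Steps~2--3 are sound: you group $x_i,x_j$ into $z$, use only convexity of $\|\cdot\|^2$ for the split $\mu z+(1-\mu)w$ and for $w$, and absorb the factor $1/\mu\geq 1$. They even give a $g$ depending only on $E$ and $r$, which is stronger than the stated quantifier order requires.
\end{itemize}

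Of the two difficulties you flag in Step~1, only the uniform-in-$t$ positivity of $\rho$ is substantive. Once $\rho$ is nondecreasing and positive on $(0,2r]$, each map $\tau\mapsto\rho(\tau_0)(\tau-\tau_0)^+/(2r-\tau_0)$ is a convex minorant of $\rho$. Hence the greatest convex minorant is positive on $(0,2r]$, and since it vanishes at $0$ it is strictly increasing.

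Finally, $\|x_i-x_j\|$ can equal $2r$ (e.g.\ $x_j=-x_i$ with $\|x_i\|=r$), so $g$ should really be defined on $[0,2r]$. Xu's $g$ is defined on $[0,\infty)$, so this is harmless, but the half-open interval $[0,2r)$ in the statement is slightly imprecise.
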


\begin{lemma}\label{phitozero}
Let $E$ be a real smooth and uniformly convex Banach space, and let $\{x_n\}$ and $\{y_n\}$ be two sequences in $E$. If either $\{x_n\}$ or $\{y_n\}$ is bounded and $\phi(x_n,y_n)\to 0$ as $n\to\infty$, then
\[
\|x_n-y_n\|\to 0.
\]
\end{lemma}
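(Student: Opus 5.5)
The plan is to argue by contradiction, using a uniform convexity modulus, the Kadec--Klee type inequality that holds in uniformly convex spaces, and the lower bound in \eqref{phibounds}.

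\textbf{Step 1: both sequences are bounded.} Suppose $\{y_n\}$ is bounded; the other case is symmetric. By \eqref{phibounds}, $(\|x_n\|-\|y_n\|)^2\le \phi(x_n,y_n)\to 0$, so $\{x_n\}$ is bounded as well. Fix $r>0$ with $x_n,y_n\in B_r(0)$ for all $n$. This also gives $\|x_n\|-\|y_n\|\to 0$.

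\textbf{Step 2: bound $\phi$ below by $g$.} Take $N=2$ in Lemma \ref{glemma}, with $x_1=x_n$, $x_2=y_n$ and $\lambda_1=\lambda_2=\tfrac12$, to get
\[
\Big\|\tfrac{x_n+y_n}{2}\Big\|^2\le \tfrac12\|x_n\|^2+\tfrac12\|y_n\|^2-\tfrac14 g(\|x_n-y_n\|).
\]
Next relate $\langle x_n,Jy_n\rangle$ to $\|x_n+y_n\|$. Since $\|Jy_n\|=\|y_n\|$,
\[
\langle x_n+y_n,Jy_n\rangle\le \|x_n+y_n\|\,\|y_n\|,
\]
so $\langle x_n,Jy_n\rangle\le \|x_n+y_n\|\|y_n\|-\|y_n\|^2$. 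Substituting into \eqref{phi} gives
\[
\phi(x_n,y_n)\ge \|x_n\|^2+3\|y_n\|^2-2\|x_n+y_n\|\|y_n\|.
\]
Set $a_n=\|x_n\|$ and $b_n=\|y_n\|$. From Step 2, $\|x_n+y_n\|\le \sqrt{2a_n^2+2b_n^2-g(\|x_n-y_n\|)}$.

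\textbf{Step 3: contradiction.} Suppose, passing to a subsequence, that $\|x_n-y_n\|\ge\varepsilon>0$. Because $a_n-b_n\to0$ and the norms are bounded, we may pass to a further subsequence with $a_n,b_n\to t$.

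If $t=0$, then $\|x_n-y_n\|\le a_n+b_n\to0$, which contradicts the assumption.

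If $t>0$, then $g(\|x_n-y_n\|)\ge g(\varepsilon)>0$, since $g$ is strictly increasing with $g(0)=0$. The lower bound then tends to at least
\[
4t^2-2t\sqrt{4t^2-g(\varepsilon)}>0,
\]
which contradicts $\phi(x_n,y_n)\to0$. Hence $\|x_n-y_n\|\to0$.

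The main obstacle is Step 3. Lemma \ref{glemma} gives $g$ only on the bounded ball and for fixed $r$, so the argument must keep track of the ball $B_r(0)$ throughout. It must also handle the degenerate case of vanishing norms separately; the subsequence argument above is designed to do both.
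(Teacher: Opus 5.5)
Your proof is correct. The paper does not prove this lemma; it is the standard Kamimura--Takahashi result, quoted without proof. The relevant comparison is therefore with the usual argument. That argument first establishes the uniform estimate $g(\|x-y\|)\le \phi(x,y)$ for all $x,y\in B_r(0)$, obtained for instance by combining Xu's inequality with $\phi(z,y)\ge 0$ at $z=\lambda x+(1-\lambda)y$, and then concludes in one line.

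Your route uses the same ingredient, Lemma~\ref{glemma} in the midpoint case. The difference is the step $\langle x_n+y_n,Jy_n\rangle\le\|x_n+y_n\|\,\|y_n\|$: because you keep the product intact, your lower bound degenerates as the norms tend to $0$. That is what forces your subsequence extraction and the separate case $t=0$.

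Your own inequality in fact already yields the clean estimate. Insert $2\|x_n+y_n\|\,\|y_n\|\le \tfrac12\|x_n+y_n\|^2+2\|y_n\|^2$ and then $\|x_n+y_n\|^2\le 2a_n^2+2b_n^2-g(\|x_n-y_n\|)$. This gives $\phi(x_n,y_n)\ge a_n^2+b_n^2-\tfrac12\|x_n+y_n\|^2\ge \tfrac12 g(\|x_n-y_n\|)$. Then $\|x_n-y_n\|\to 0$ follows immediately, because $g$ is strictly increasing with $g(0)=0$. So the case analysis is an unnecessary (though harmless) detour, and your argument otherwise stays within the paper's preliminaries.

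There are two minor technical points.
\begin{itemize}
\item Lemma~\ref{glemma} defines $g$ only on $[0,2r)$, while points of the closed ball can satisfy $\|x_n-y_n\|=2r$. Choose $r>\sup_n\max\{\|x_n\|,\|y_n\|\}$ strictly.
\item Your Step 3 needs one $g$ serving all $n$. That is the standard form of the lemma, in which $g$ depends only on $r$. The paper's wording literally lets $g$ depend on the chosen points, although the paper itself uses the uniform version in Step 4 of Theorem~\ref{mainthm}.
\end{itemize}
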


\begin{lemma}[Generalized projection inequality]\label{projineq}
Let $E$ be a smooth, strictly convex and reflexive Banach space, and let $C$ be a nonempty closed and convex subset of $E$. Then $z=\Pi_Cx$ if and only if
\[
\langle y-z,Jx-Jz\rangle\leq 0,
\qquad \forall y\in C.
\]
Moreover,
\[
\phi(y,z)+\phi(z,x)\leq \phi(y,x),
\qquad \forall y\in C.
\]
\end{lemma}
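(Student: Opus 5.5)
The plan is to derive everything from a single algebraic identity for $\phi$, the \emph{three-point identity}
\[
\phi(y,x)=\phi(y,z)+\phi(z,x)+2\langle y-z,Jz-Jx\rangle,\qquad x,y,z\in E,
\]
which I check by expanding all three $\phi$-terms with \eqref{phi}. The terms $\|y\|^2$ and $\|x\|^2$ match on both sides. The $\|z\|^2$ terms cancel against $-2\langle z,Jz\rangle=-2\|z\|^2$. The pairings $\langle y,Jx\rangle$ and $\langle z,Jx\rangle$ appear with matching coefficients. Given the identity, the "moreover" part and the "if" direction are immediate. If $\langle y-z,Jx-Jz\rangle\le 0$ for all $y\in C$, then $\langle y-z,Jz-Jx\rangle\ge 0$, so
\[
\phi(y,x)\ge \phi(y,z)+\phi(z,x)\ge \phi(z,x),\qquad y\in C,
\]
using $\phi\ge 0$ from \eqref{phibounds}. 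Hence $z$ minimizes $\phi(\cdot,x)$ over $C$, so $z=\Pi_Cx$, and the displayed inequality is exactly the claimed bound. This step needs the minimizer to be unique, which I take up at the end.

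For the "only if" direction, let $z=\Pi_Cx$, fix $y\in C$, and set $z_t=z+t(y-z)\in C$ for $t\in(0,1)$. Minimality gives $\phi(z_t,x)-\phi(z,x)\ge 0$, that is,
\[
\|z_t\|^2-\|z\|^2-2\langle z_t-z,Jx\rangle\ge 0 .
\]
Since $J$ is the (single-valued) subdifferential of $\tfrac12\|\cdot\|^2$ on a smooth space, the subgradient inequality at $z_t$ gives $\|z\|^2-\|z_t\|^2\ge 2\langle z-z_t,Jz_t\rangle$, i.e.
\[
\|z_t\|^2-\|z\|^2\le 2\langle z_t-z,Jz_t\rangle .
\]
Combining the two displays yields $0\le 2t\langle y-z,Jz_t-Jx\rangle$. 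Dividing by $t>0$ gives $\langle y-z,Jz_t-Jx\rangle\ge 0$ for every $t\in(0,1)$.

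The main obstacle is passing to the limit $t\downarrow 0$, since $J$ need not be norm continuous here. I will use the standard fact that on a smooth Banach space $J$ is norm-to-weak$^*$ continuous (demicontinuous). Since $z_t\to z$ in norm, $Jz_t\rightharpoonup^* Jz$, and pairing with the fixed vector $y-z$ gives $\langle y-z,Jz-Jx\rangle\ge 0$, which is the required inequality.

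Finally, the well-posedness of $\Pi_C$ behind the "if" direction. Strict convexity of $E$ makes $\|\cdot\|^2$ strictly convex, so $\phi(\cdot,x)=\|\cdot\|^2-2\langle\cdot,Jx\rangle+\|x\|^2$ is strictly convex and has at most one minimizer on the convex set $C$. Existence follows from reflexivity: $\phi(\cdot,x)$ is convex and continuous, hence weakly lower semicontinuous, and it is coercive by \eqref{phibounds}. Bounded minimizing sequences therefore have weakly convergent subsequences with limit in the weakly closed set $C$, and that limit is a minimizer.
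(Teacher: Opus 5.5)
The paper gives no proof of this lemma (it is recalled from the literature, due to Alber and to Kamimura--Takahashi). Your argument is correct and is essentially the standard one: the three-point identity $\phi(y,x)=\phi(y,z)+\phi(z,x)+2\langle y-z,Jz-Jx\rangle$ gives the ``if'' direction and, once the ``only if'' direction has supplied the variational inequality for $\Pi_Cx$, the ``moreover'' estimate. The ``only if'' direction comes from first-order optimality along $z+t(y-z)$; your subgradient-plus-demicontinuity limit is equivalent to taking the G\^ateaux derivative $2\langle y-z,Jz\rangle$ of $\|\cdot\|^2$ at $z$. The one thing to make explicit is that the ``if'' direction presupposes $z\in C$, which the statement leaves implicit and you use when you say $z$ minimizes $\phi(\cdot,x)$ over $C$.
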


\begin{lemma}\label{EPrezolvent}
Let $C$ be a nonempty closed subset of a smooth, strictly convex and reflexive Banach space $E$ such that $JC$ is closed and convex. Let $f$ be a bifunction from $JC\times JC$ to $\mathbb{R}$ satisfying $(A1)-(A4)$. For $r>0$ and $x\in E$, define $T_r^f:E\to C$ by
\[
T_r^f x=\left\{z\in C:f(Jz,Jy)+\frac{1}{r}\langle y-z,Jz-Jx\rangle\geq 0,\ \forall y\in C\right\}.
\]
Then $T_r^f$ is single valued, $F(T_r^f)=EP(f)$, $JEP(f)$ is closed and convex, and
\[
\phi(p,T_r^f x)+\phi(T_r^f x,x)\leq \phi(p,x),\qquad \forall p\in EP(f).
\]
\end{lemma}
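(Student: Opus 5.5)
We prove Lemma~\ref{EPrezolvent} in four steps, following the Blum--Oettli/Combettes--Hirstoaga argument transported to the dual via $J$.

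\textbf{Step 1: existence.} Set $K:=JC$, closed and convex in $E^{*}$, and for fixed $x\in E$ consider
\[
g(u^{*},v^{*}):=f(u^{*},v^{*})+\frac1r\langle J_{*}v^{*}-J_{*}u^{*},u^{*}-Jx\rangle,\qquad u^{*},v^{*}\in K.
\]
A point $z\in C$ lies in $T_r^fx$ exactly when $Jz\in K$ solves $g(Jz,v^{*})\ge0$ for all $v^{*}\in K$. We obtain such a point from the Blum--Oettli existence theorem in the reflexive space $E^{*}$, using a coercivity condition to handle unboundedness. The coercivity comes from the identity
\[
2\langle y-z,Jz-Jx\rangle=\phi(y,x)-\phi(y,z)-\phi(z,x),
\]
together with \eqref{phibounds}. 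Note that $g(u^{*},\cdot)$ need not be convex, since $J_{*}$ is nonlinear. So we first establish existence for the variational form in which $y$ ranges over a finite-dimensional convex subset of $C$ (a KKM argument with (A1),(A4)). We then pass to the limit using weak compactness and (A3) via the Minty-type lemma: for $u^{*}\in K$, the inequality $g(u^{*},v^{*})\ge0$ for all $v^{*}$ is equivalent to $-f(v^{*},u^{*})+\frac1r\langle\cdot\rangle\ge0$ for all $v^{*}$.

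\textbf{Step 1 is the main obstacle.} The nonlinearity of $J$ interacts with the convexity assumptions, because (A4) gives convexity of $f$ in the dual variable while the regularizing term is naturally affine in the primal variable. This is handled by working in $E^{*}$ with the convex set $JC$ and using the monotonicity of $J$ in the Minty step.

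\textbf{Step 2: single-valuedness.} Take $z_1,z_2\in T_r^fx$. Test the defining inequality for $z_1$ with $y=z_2$ and for $z_2$ with $y=z_1$, then add. Monotonicity (A2) gives
\[
\langle z_2-z_1,Jz_1-Jz_2\rangle\ge0,
\]
that is, $\langle z_1-z_2,Jz_1-Jz_2\rangle\le0$. Strict convexity makes $J$ strictly monotone, so $z_1=z_2$.

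\textbf{Step 3: $F(T_r^f)=EP(f)$.} If $z=T_r^fz$, then with $x=z$ the regularizing term vanishes and $f(Jz,Jy)\ge0$ for all $y\in C$. Conversely, if $z\in EP(f)$, then $z$ satisfies the defining inequality with $x=z$, so uniqueness gives $T_r^fz=z$.

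\textbf{Step 4: closedness, convexity and the $\phi$-inequality.} By the Minty characterization,
\[
JEP(f)=\{u^{*}\in K: f(v^{*},u^{*})\le0\ \ \forall v^{*}\in K\},
\]
an intersection of closed convex sets by (A4). Hence $JEP(f)$ is closed and convex.

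For the $\phi$-inequality, let $p\in EP(f)$ and $z=T_r^fx$. Put $y=p$ in the defining inequality. By (A2), $f(Jz,Jp)\le -f(Jp,Jz)\le0$, so
\[
\langle p-z,Jz-Jx\rangle\ge0 .
\]
Substituting the identity above with $y=p$ gives
\[
\phi(p,x)-\phi(p,z)-\phi(z,x)\ge0,
\]
which is the claimed estimate.
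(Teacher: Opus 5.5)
Your Steps 2--4 are correct as conditional arguments. They show that $T_r^fx$ contains at most one point. Wherever it is nonempty, they give $F(T_r^f)=EP(f)$, the Minty description of $JEP(f)$ (hence its closedness and convexity), and the $\phi$-estimate via the three-point identity.

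The gap is Step 1, and it cannot be closed. The KKM covering step needs convexity of the whole bifunction in its second variable: for $w^{*}=\sum t_iv_i^{*}$ one uses $0=g(w^{*},w^{*})\leq\sum t_ig(w^{*},v_i^{*})$. Assumption (A4) supplies this only for the $f$-part. The term $v^{*}\mapsto\langle J_{*}v^{*},u^{*}-Jx\rangle$ is not convex. In primal variables, $C$ need not be convex and $y\mapsto f(Jz,Jy)$ need not be convex either. Monotonicity of $J$ makes $g$ monotone, which is what the Minty switch uses, but it does not replace convexity in the covering argument.

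In fact the existence claim is false under the stated hypotheses. Take $E=\ell_p^2$ with $p\neq 2$, $C=E$ (so $JC=E^{*}$), $a=(1,0)$ and $f(u^{*},v^{*})=\langle a,v^{*}-u^{*}\rangle$; this $f$ satisfies (A1)--(A4). Suppose $z\in T_r^fx$, and put $w=Jz-Jx$ and $\Psi(y)=\langle a,Jy\rangle+\frac{1}{r}\langle y,w\rangle$. The defining inequality says $\Psi(y)\geq\Psi(z)$ for all $y\in E$. Since $\Psi$ is odd and positively homogeneous, being bounded below forces $\Psi\equiv 0$. Then $y\mapsto\langle a,Jy\rangle$ would be linear. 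But $\langle a,J(1,t)\rangle=(1+|t|^p)^{(2-p)/p}$ is even and nonconstant in $t$, hence not affine. Thus $T_r^fx=\emptyset$ for every $x$ and every $r>0$.

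Non-convexity of $C$ alone also breaks the claim, even for $f\equiv 0$. In $\ell_{3/2}^2$, let $C=J^{-1}\{(1,s):s\in\mathbb{R}\}$ and $x=0$. Any solution must be the minimal-norm point $z=(1,0)$ of $C$. Yet $\langle y-z,Jz\rangle=(1+|s|^3)^{-1/3}-1<0$ for $y=J^{-1}(1,s)$ with $s\neq 0$.

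The paper quotes this lemma without proof, so it offers no way around the problem. What is provable with $f$ on $JC\times JC$ is the version with regularizer $\frac{1}{r}\langle z-x,Jy-Jz\rangle$, which is affine in $Jy$. Existence then follows from the Blum--Oettli/Takahashi--Zembayashi lemma applied in $E^{*}$ on $JC$. However, the resulting estimate is $\phi(T_r^fx,p)+\phi(x,T_r^fx)\leq\phi(x,p)$, with the arguments of $\phi$ reversed.
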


\begin{lemma}\label{VIrezolvent}
Let $C$ be a nonempty closed subset of a smooth, strictly convex and reflexive Banach space $E$. Let $A:C\to E^{*}$ be a continuous monotone mapping. For $r>0$ and $x\in E$, define $F_r^A:E\to C$ by
\[
F_r^A x=\left\{z\in C:\langle y-z,Az\rangle+\frac{1}{r}\langle y-z,Jz-Jx\rangle\geq 0,\ \forall y\in C\right\}.
\]
Then $F_r^A$ is single valued, $F(F_r^A)=VI(C,A)$, $JVI(C,A)$ is closed and convex, and
\[
\phi(p,F_r^A x)+\phi(F_r^A x,x)\leq \phi(p,x),\qquad \forall p\in VI(C,A).
\]
\end{lemma}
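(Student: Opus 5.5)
The plan is to treat $F_r^A$ as the resolvent of the maximal monotone operator $A+N_C$ with respect to $J$. Throughout I take $C$ closed and convex; convexity is implicit, since otherwise the normal cone and Minty arguments below are unavailable. The argument has four parts: existence, uniqueness, identification of the fixed set, and the $\phi$-inequality. Closedness and convexity of the solution set fall out of the Minty characterization.

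\emph{Existence.} Since $A$ is continuous and monotone on $C$, it is hemicontinuous. By Rockafellar's sum theorem, $B:=A+N_C$ is maximal monotone, where $N_C$ is the normal cone of $C$. In a smooth, strictly convex, reflexive space, Rockafellar's surjectivity theorem gives $R(B+\tfrac1r J)=E^{*}$. So for every $x\in E$ there is $z\in C$ with
\[
\tfrac1r Jx\in Az+N_Cz+\tfrac1r Jz .
\]
Unwinding the definition of $N_C$, this inclusion is exactly
\[
\langle y-z,Az\rangle+\tfrac1r\langle y-z,Jz-Jx\rangle\ge 0\quad\text{for all } y\in C .
\]

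\emph{Uniqueness.} Suppose $z_1$ and $z_2$ both satisfy the inequality. Put $y=z_2$ in the inequality for $z_1$ and $y=z_1$ in the one for $z_2$, then add. This gives
\[
\langle z_1-z_2,Az_1-Az_2\rangle+\tfrac1r\langle z_1-z_2,Jz_1-Jz_2\rangle\le 0 .
\]
Monotonicity of $A$ then forces $\langle z_1-z_2,Jz_1-Jz_2\rangle\le0$. Strict monotonicity of $J$, which holds because $E$ is strictly convex, gives $z_1=z_2$, so $F_r^A$ is single valued.

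\emph{Fixed points.} If $z=x$, the $J$-term in the defining inequality vanishes, and what remains is exactly the condition $x\in VI(C,A)$. Conversely, if $p\in VI(C,A)$, then $z=p$ satisfies the defining inequality with $x=p$, and uniqueness gives $F_r^Ap=p$. Hence $F(F_r^A)=VI(C,A)$.

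\emph{The $\phi$-inequality.} Fix $p\in VI(C,A)$ and set $z=F_r^Ax$. By monotonicity of $A$ and then $p\in VI(C,A)$,
\[
\langle p-z,Az\rangle\le\langle p-z,Ap\rangle\le 0 .
\]
Taking $y=p$ in the defining inequality therefore gives $\langle p-z,Jz-Jx\rangle\ge 0$. Next I use the three-point identity
\[
\phi(p,x)=\phi(p,z)+\phi(z,x)+2\langle p-z,Jz-Jx\rangle ,
\]
which is checked directly from \eqref{phi}. Combined with the sign just obtained, it yields $\phi(p,z)+\phi(z,x)\le\phi(p,x)$.

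\emph{Closed convex structure.} By Minty's lemma, using continuity and monotonicity of $A$ on the convex set $C$,
\[
VI(C,A)=\bigcap_{y\in C}\{p\in C:\langle y-p,Ay\rangle\ge0\}.
\]
Each set in this intersection is the intersection of $C$ with a closed half-space in $p$, so $VI(C,A)$ is closed and convex.

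The step I expect to be the main obstacle is the claim as literally stated, that $JVI(C,A)$ is closed and convex. Minty gives the convex structure of $VI(C,A)$ in $E$, not of its image under $J$, and $J$ is nonlinear outside Hilbert space. I would first check whether the intended statement is ``$VI(C,A)$ is closed and convex''; that is what the later shrinking-projection arguments actually need. If the $J$-image really is required, I would try to pass through the equilibrium formulation of Lemma~\ref{EPrezolvent}, setting $f(x^{*},y^{*})=\langle J^{-1}y^{*}-J^{-1}x^{*},AJ^{-1}x^{*}\rangle$. For that I would need to verify (A4) for this $f$, and without further structure on $A$ I do not expect it to hold. The existence step is the other delicate point: it rests on the surjectivity theorem and on the sum rule for $A+N_C$, both standard but nontrivial.
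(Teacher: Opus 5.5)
The paper gives no proof of this lemma; it only recalls it. So there is nothing of the paper's to match. Judged on its own, your proof is correct for the lemma with $C$ closed \emph{and convex} and with ``$VI(C,A)$ is closed and convex'' as the third clause.

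Both of your departures from the literal statement are forced, because the literal statement is false. You can settle this rather than hedge.

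\textbf{Convexity of $C$.} Without it, existence fails. Take $E=\mathbb{R}$, $C=\{-1,1\}$, $A=0$, $x=0$. Each $z\in C$ violates $\frac1r(y-z)z\ge 0$ at $y=-z$, so $F_r^A0=\emptyset$.

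\textbf{The $J$-image clause.} It fails even under the standing hypotheses of Theorem~\ref{mainthm}. Take $E=(\mathbb{R}^2,\|\cdot\|_p)$ with $p\ne 2$ and $C=E$, so $JC=E^*$. Let $Ax=(x_1+x_2-1)(1,1)$, which is continuous and monotone, with $VI(E,A)=\{x:x_1+x_2=1\}$. Since $Jx=\|x\|_p^{2-p}\big(|x_1|^{p-1}\mathrm{sgn}\,x_1,\,|x_2|^{p-1}\mathrm{sgn}\,x_2\big)$, the set $JVI(E,A)$ contains $Je_1=e_1$ and $Je_2=e_2$. Its only point with equal coordinates is $J(\tfrac12,\tfrac12)=2^{2/p-2}(1,1)\ne\tfrac12(1,1)$. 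So the midpoint of $e_1$ and $e_2$ is missing, and the set is not convex.

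Only the closedness half survives. If $Jp_n\to x^*$ with $p_n\in VI(C,A)$, then $p_n\rightharpoonup J^{-1}x^*$ by norm-to-weak continuity of $J^{-1}$, and $VI(C,A)$ is weakly closed. So drop the fallback through Lemma~\ref{EPrezolvent}; your doubt about (A4) is right, and no route can give convexity of the $J$-image.

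Also, contrary to your remark, Theorem~\ref{mainthm} assumes $B$ closed and convex outright. From this lemma it uses only single-valuedness and the $\phi$-inequality, so the faulty clause does no damage there.

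\textbf{On the route.} A common proof in the literature writes the VI as the untwisted equilibrium problem $f(x,y)=\langle y-x,Ax\rangle$ on $C\times C$, not the $J$-twisted bifunction you tried. One checks (A1)--(A4) directly: monotonicity of $A$ gives (A2), continuity gives (A3), and affinity in $y$ gives (A4). Existence then comes from the KKM/Blum--Oettli-based equilibrium resolvent lemma. Uniqueness, the fixed-point identification and the $\phi$-inequality follow by exactly the computations you give, including the three-point identity.

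Your existence step instead uses maximal monotonicity of $A+N_C$ and surjectivity of $J+r(A+N_C)$. This is equally standard. It trades the KKM machinery for monotone-operator theory and gives the explicit formula $F_r^A=(J+r(A+N_C))^{-1}J$. Cite the right result, though: it is Rockafellar's theorem for a hemicontinuous monotone $A:C\to E^*$ plus $N_C$, not the general sum theorem. The general sum theorem would need $A$ itself to be maximal monotone on $E$, which fails when $A$ is defined only on $C\ne E$.
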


\section{Main results}

Let $E$ be a uniformly smooth and uniformly convex real Banach space with dual space $E^{*}$, and let $C$ be a nonempty closed and convex subset of $E$ such that $JC$ is closed and convex. Let $\{f_i\}_{i=1}^{\infty}$ be a countable family of bifunctions from $JC\times JC$ to $\mathbb{R}$ satisfying $(A1)-(A4)$, and let $\{A_j\}_{j=1}^{\infty}$ be a countable family of continuous monotone mappings from $C$ into $E^{*}$. Let $\{T_n\}_{n=1}^{\infty}$ be a countable family of generalized $J_{*}$-nonexpansive mappings from $C$ into $E^{*}$.

Let $\mu,\nu:\mathbb{N}\to\mathbb{N}$ be index maps such that
\[
\{n\in\mathbb{N}:\mu(n)=i\}\quad\text{and}\quad
\{n\in\mathbb{N}:\nu(n)=j\}
\]
are infinite for every $i,j\in\mathbb{N}$. For convenience, at the $n$th step we write
\[
f_n:=f_{\mu(n)},\qquad A_n:=A_{\nu(n)}.
\]
Thus each equilibrium problem and each variational inequality problem is selected infinitely many times.

Let $h:C\to C$ be a contraction with coefficient $\rho\in(0,1)$, and let $\bar u\in C$ be fixed. Let $\{\lambda_n\}$ and $\{\sigma_n\}$ be sequences in $[0,1)$ such that
\begin{equation}\label{lambdacond}
\lambda_n+\sigma_n<1,
\qquad \lambda_n\to 0,
\qquad \sigma_n\to 0.
\end{equation}
Let $\alpha_1,\alpha_2,\alpha_3\in(0,1)$ satisfy
\begin{equation}\label{alphacond}
\alpha_1+\alpha_2+\alpha_3=1.
\end{equation}
Let $\{r_n\}\subset [a,\infty)$ for some $a>0$. Starting from $x_1\in C$ and $C_1=C$, define
\begin{equation}\label{algorithm}
\left\{
\begin{array}{ll}
s_n=J^{-1}\big(\lambda_nJh(x_n)+\sigma_nJ\bar u+(1-\lambda_n-\sigma_n)Jx_n\big),\\[2mm]
z_n=\left\{z\in C:f_n(Jz,Jy)+\frac{1}{r_n}\langle y-z,Jz-Js_n\rangle\geq 0,\ \forall y\in C\right\},\\[2mm]
u_n=\left\{z\in C:\langle y-z,A_nz\rangle+\frac{1}{r_n}\langle y-z,Jz-Js_n\rangle\geq 0,\ \forall y\in C\right\},\\[2mm]
y_n=J^{-1}\big(\alpha_1Js_n+\alpha_2Jz_n+\alpha_3T_nu_n\big),\\[2mm]
C_{n+1}=\{v\in C_n:\phi(v,y_n)\leq \phi(v,s_n)\},\\[2mm]
x_{n+1}=\Pi_{C_{n+1}}x_1,
\end{array}
\right.
\end{equation}
for all $n\geq 1$.

\begin{remark}
The first line of \eqref{algorithm} is the viscosity-Halpern regularization step. The contraction $h$ gives the viscosity perturbation, while the fixed point $\bar u$ gives the Halpern anchor. If $\lambda_n=\sigma_n=0$ for all $n\geq 1$, then $s_n=x_n$ and the method reduces to a non-regularized countable-family hybrid projection scheme.
\end{remark}

\begin{remark}
A simple admissible index rule is the triangular ordering
\[
1,1,2,1,2,3,1,2,3,4,\ldots.
\]
Using this ordering for both $\mu$ and $\nu$ ensures that every positive integer is selected infinitely many times.
\end{remark}

\begin{theorem}\label{mainthm}
Let $E$ be a uniformly smooth and uniformly convex real Banach space with dual space $E^{*}$, and let $C$ be a nonempty closed and convex subset of $E$ such that $JC$ is closed and convex. Let $\{f_i\}_{i=1}^{\infty}$ be a countable family of bifunctions from $JC\times JC$ to $\mathbb{R}$ satisfying $(A1)-(A4)$, let $\{A_j\}_{j=1}^{\infty}$ be a countable family of continuous monotone mappings from $C$ into $E^{*}$, and let $\{T_n\}_{n=1}^{\infty}$ be a countable family of generalized $J_{*}$-nonexpansive maps. Let $\Gamma$ be a family of $J_{*}$-closed and generalized $J_{*}$-nonexpansive maps from $C$ into $E^{*}$ such that
\[
\bigcap_{n=1}^{\infty}F_J(T_n)=F_J(\Gamma)\neq\emptyset.
\]
Assume that $\{T_n\}$ satisfies the NST-condition with $\Gamma$, and suppose that the common solution set
\[
B:=F_J(\Gamma)\cap\left[\bigcap_{i=1}^{\infty}EP(f_i)\right]\cap\left[\bigcap_{j=1}^{\infty}VI(C,A_j)\right]
\]
is nonempty, closed and convex. Then the sequence $\{x_n\}$ generated by \eqref{algorithm} is well defined and converges strongly to $\Pi_Bx_1$, the generalized projection of $x_1$ onto $B$.
\end{theorem}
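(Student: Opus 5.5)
The plan is the standard shrinking-projection argument, with two extra ingredients: the regularized point $s_n$ is handled by showing $\|s_n-x_n\|\to 0$, and the countable families are handled through the infinitely recurring index sets of $\mu$ and $\nu$.

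\textbf{Step 1: well-definedness and $B\subset C_n$.} By Lemmas \ref{EPrezolvent} and \ref{VIrezolvent}, $z_n$ and $u_n$ are single-valued. The condition $\phi(v,y_n)\le\phi(v,s_n)$ is equivalent to
\[
2\langle v,Js_n-Jy_n\rangle\le \|s_n\|^2-\|y_n\|^2,
\]
which is affine in $v$. Hence each $C_{n+1}$ is closed and convex.

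For $p\in B$, write $w_n=J_*T_nu_n$. Expanding $\phi(p,y_n)=\|p\|^2-2\langle p,\alpha_1Js_n+\alpha_2Jz_n+\alpha_3T_nu_n\rangle+\|\alpha_1Js_n+\alpha_2Jz_n+\alpha_3T_nu_n\|^2$, I apply Lemma \ref{glemma} in the uniformly convex space $E^*$. Together with Lemmas \ref{EPrezolvent} and \ref{VIrezolvent} and the generalized $J_*$-nonexpansiveness of $T_n$ at $u_n$, this gives
\[
\phi(p,y_n)\le \phi(p,s_n)-\alpha_2\phi(z_n,s_n)-\alpha_3\phi(u_n,s_n)-\alpha_1\alpha_3\, g(\|Js_n-T_nu_n\|). \tag{$*$}
\]
In particular $\phi(p,y_n)\le\phi(p,s_n)$, so $B\subset C_{n+1}$ by induction. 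Thus every $C_n$ is nonempty and $x_{n+1}=\Pi_{C_{n+1}}x_1$ exists.

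\textbf{Step 2: strong convergence of $\{x_n\}$ and its consequences.} Since $x_{n+1}\in C_{n+1}\subset C_n$, Lemma \ref{projineq} gives
\[
\phi(x_{n+1},x_n)+\phi(x_n,x_1)\le\phi(x_{n+1},x_1)\le\phi(\Pi_Bx_1,x_1).
\]
So $\phi(x_n,x_1)$ is nondecreasing and bounded, and $\{x_n\}$ is bounded by \eqref{phibounds}. The same inequality with $m>n$ gives $\phi(x_m,x_n)\le\phi(x_m,x_1)-\phi(x_n,x_1)\to0$. By Lemma \ref{phitozero}, $\{x_n\}$ is Cauchy, so $x_n\to x^*\in C$.

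Next, $h(x_n)$ is bounded and $\lambda_n,\sigma_n\to0$, so $\|Js_n-Jx_n\|\to0$. Since $E^*$ is uniformly smooth, $J^{-1}$ is uniformly continuous on bounded sets, hence $s_n\to x^*$.

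Because $x_{n+1}\in C_{n+1}$, we have $\phi(x_{n+1},y_n)\le\phi(x_{n+1},s_n)\to 0$. Lemma \ref{phitozero} then gives $y_n\to x^*$. By norm continuity of $J$, it follows that $\phi(p,s_n)-\phi(p,y_n)\to0$. Feeding this into $(*)$ yields
\[
\phi(z_n,s_n)\to0,\qquad \phi(u_n,s_n)\to0,\qquad g(\|Js_n-T_nu_n\|)\to0.
\]
Hence $z_n,u_n\to x^*$, $\|Jz_n-Js_n\|\to0$, $\|Ju_n-Js_n\|\to 0$, and $\|Ju_n-T_nu_n\|\to0$.

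\textbf{Step 3: $x^*\in B$.} This is the main obstacle, since each index is only visited along a subsequence.

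\emph{Fixed points.} The NST-condition gives $\|Ju_n-Tu_n\|\to0$ for every $T\in\Gamma$. Then $J_*Tu_n\to x^*$, and $J_*$-closedness gives $Tx^*=Jx^*$.

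\emph{Equilibrium problems.} Fix $i$ and take the infinite set $\{n_k\}$ with $\mu(n_k)=i$. Monotonicity (A2) applied to the resolvent inequality gives
\[
\tfrac1{r_{n_k}}\langle y-z_{n_k},Jz_{n_k}-Js_{n_k}\rangle\ge f_i(Jy,Jz_{n_k}).
\]
Since $r_n\ge a$, the left side tends to $0$. Lower semicontinuity in (A4), with $Jz_{n_k}\to Jx^*$, gives $f_i(Jy,Jx^*)\le0$ for all $y\in C$. Now put $y_t^*=tJy+(1-t)Jx^*\in JC$, which is allowed since $JC$ is convex. Using (A1), (A4) and this inequality at $J^{-1}y_t^*$, we get $f_i(y_t^*,Jy)\ge0$. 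Letting $t\downarrow0$ with (A3) gives $f_i(Jx^*,Jy)\ge0$.

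\emph{Variational inequalities.} Fix $j$ and take $\{n_k\}$ with $\nu(n_k)=j$. Monotonicity of $A_j$ gives
\[
\langle y-u_{n_k},A_jy\rangle\ge -\tfrac1{r_{n_k}}\langle y-u_{n_k},Ju_{n_k}-Js_{n_k}\rangle\to0,
\]
so $\langle y-x^*,A_jy\rangle\ge0$ for all $y\in C$. The Minty argument, using convexity of $C$ and continuity of $A_j$, then gives $x^*\in VI(C,A_j)$.

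\textbf{Step 4: identification of the limit.} Since $B\subset C_n$, we have $\phi(x_n,x_1)\le\phi(\Pi_Bx_1,x_1)$ for all $n$. Letting $n\to\infty$ gives $\phi(x^*,x_1)\le\phi(\Pi_Bx_1,x_1)$. As $x^*\in B$, uniqueness of the generalized projection forces $x^*=\Pi_Bx_1$.
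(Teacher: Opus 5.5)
Your proposal is correct and takes essentially the same route as the paper. The shared steps are: shrinking generalized projections with $B\subset C_n$; the Cauchy estimate $\phi(x_m,x_n)\le\phi(x_m,x_1)-\phi(x_n,x_1)$; $\|s_n-x_n\|\to0$; Lemma~\ref{glemma} for the residuals; NST plus $J_*$-closedness; and subsequences along the fibers of $\mu,\nu$ for the EP and VI parts. Your inequality $(*)$ simply folds the paper's two-stage Step~4 (first $\phi(p,z_n),\phi(p,u_n)\to\phi(p,x^*)$, then the resolvent lemmas) into one line. The one point to tighten is that $g$ in $(*)$ must be a single fixed function for the limiting argument. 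So take it from one ball containing $\{Js_n\}$, $\{Jz_n\}$, $\{T_nu_n\}$; boundedness follows from $\phi(p,z_n)\le\phi(p,s_n)$, $\phi(p,J_*T_nu_n)\le\phi(p,s_n)$ and \eqref{phibounds}, as the paper does.
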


\begin{proof}
We divide the proof into seven steps.

\noindent {\bf Step 1.} We show that the construction is well defined and that $B\subset C_n$ for all $n\geq 1$.

Clearly, $B\subset C_1=C$. Suppose that $B\subset C_n$ for some $n\geq 1$. Let $p\in B$. Then $p\in EP(f_n)$ and $p\in VI(C,A_n)$. By Lemmas~\ref{EPrezolvent} and \ref{VIrezolvent}, and by the definitions of $z_n$ and $u_n$, we have
\begin{equation}\label{resolventineq}
\phi(p,z_n)\leq \phi(p,s_n),\qquad \phi(p,u_n)\leq \phi(p,s_n).
\end{equation}
Since $p\in F_J(T_n)$ and $T_n$ is generalized $J_{*}$-nonexpansive,
\begin{equation}\label{Tineq}
\phi(p,(J_{*}\circ T_n)u_n)\leq \phi(p,u_n)\leq \phi(p,s_n).
\end{equation}
Using the definition of $y_n$ and the convexity inequality for the square of the norm in $E^{*}$, we obtain
\begin{align*}
\phi(p,y_n)
&=\phi\big(p,J^{-1}(\alpha_1Js_n+\alpha_2Jz_n+\alpha_3T_nu_n)\big)\\
&\leq \alpha_1\phi(p,s_n)+\alpha_2\phi(p,z_n)+\alpha_3\phi(p,(J_{*}\circ T_n)u_n)\\
&\leq \phi(p,s_n).
\end{align*}
Thus $p\in C_{n+1}$, and hence $B\subset C_{n+1}$.

Moreover, $C_{n+1}$ is closed and convex because the inequality
\[
\phi(v,y_n)\leq \phi(v,s_n)
\]
is equivalent to
\[
2\langle v,Js_n-Jy_n\rangle\leq \|s_n\|^2-\|y_n\|^2,
\]
which defines a closed half-space intersected with $C_n$. Since $B\subset C_{n+1}$, the set $C_{n+1}$ is nonempty. By induction, each $C_n$ is nonempty, closed and convex. Therefore $\Pi_{C_n}x_1$ exists for each $n$, and the algorithm is well defined.

\noindent {\bf Step 2.} We show that $\{x_n\}$ converges strongly to some point $x^{*}\in C$.

Since $x_n=\Pi_{C_n}x_1$ and $B\subset C_n$, we have
\[
\phi(x_n,x_1)\leq \phi(p,x_1),\qquad \forall p\in B.
\]
Thus $\{\phi(x_n,x_1)\}$ is bounded, and consequently $\{x_n\}$ is bounded. Also, since $x_{n+1}\in C_{n+1}\subset C_n$ and $x_n=\Pi_{C_n}x_1$, we have
\[
\phi(x_n,x_1)\leq \phi(x_{n+1},x_1),\qquad n\geq 1.
\]
Therefore $\lim_{n\to\infty}\phi(x_n,x_1)$ exists.

Let $m>n$. Since $x_m\in C_m\subset C_n$, Lemma~\ref{projineq} gives
\begin{align}\label{Cauchyineq}
\phi(x_m,x_n)&\leq \phi(x_m,x_1)-\phi(x_n,x_1)\to 0
\end{align}
as $m,n\to\infty$. By Lemma~\ref{phitozero}, $\|x_m-x_n\|\to 0$ as $m,n\to\infty$. Hence $\{x_n\}$ is Cauchy. Since $C$ is closed, there exists $x^{*}\in C$ such that
\begin{equation}\label{xnconv}
x_n\to x^{*}.
\end{equation}

\noindent {\bf Step 3.} We show that $s_n\to x^{*}$ and $y_n\to x^{*}$.

Since $\{x_n\}$ is bounded and $h$ is a contraction, the sequence $\{h(x_n)\}$ is bounded. Also $\bar u$ is fixed. By \eqref{lambdacond},
\[
\lambda_nJh(x_n)+\sigma_nJ\bar u+(1-\lambda_n-\sigma_n)Jx_n-Jx_n\to 0.
\]
Since $J^{-1}=J_{*}$ is uniformly continuous on bounded subsets of $E^{*}$, we obtain
\begin{equation}\label{snconv}
\|s_n-x_n\|\to 0.
\end{equation}
Combining \eqref{xnconv} and \eqref{snconv}, we get
\begin{equation}\label{snxstar}
s_n\to x^{*}.
\end{equation}

Since $x_{n+1}\in C_{n+1}$, we have
\[
\phi(x_{n+1},y_n)\leq \phi(x_{n+1},s_n).
\]
Using \eqref{xnconv} and \eqref{snxstar}, we obtain $\phi(x_{n+1},s_n)\to 0$. Hence $\phi(x_{n+1},y_n)\to 0$. By Lemma~\ref{phitozero},
\[
\|x_{n+1}-y_n\|\to 0.
\]
Since $x_{n+1}\to x^{*}$, it follows that
\begin{equation}\label{ynconv}
y_n\to x^{*}.
\end{equation}

\noindent {\bf Step 4.} We show that $u_n\to x^{*}$, $z_n\to x^{*}$ and $\|Ju_n-T_nu_n\|\to 0$.

Let $p\in B$. The sequences $\{Js_n\}$, $\{Jz_n\}$ and $\{T_nu_n\}$ are bounded in $E^{*}$. Applying Lemma~\ref{glemma} in $E^{*}$ to the bounded ball containing these points, we obtain a continuous, strictly increasing and convex function $g$ with $g(0)=0$ such that
\begin{align}\label{gineq}
\phi(p,y_n)
&\leq \alpha_1\phi(p,s_n)+\alpha_2\phi(p,z_n)+\alpha_3\phi(p,(J_{*}\circ T_n)u_n)\nonumber\\
&\quad -\alpha_1\alpha_3g(\|Js_n-T_nu_n\|)\nonumber\\
&\leq \phi(p,s_n)-\alpha_1\alpha_3g(\|Js_n-T_nu_n\|).
\end{align}
Since $s_n\to x^{*}$ and $y_n\to x^{*}$, we have
\[
\phi(p,s_n)-\phi(p,y_n)\to 0.
\]
It follows from \eqref{gineq} that
\[
g(\|Js_n-T_nu_n\|)\to 0.
\]
Since $g$ is strictly increasing and $g(0)=0$, we get
\begin{equation}\label{residual1}
\|Js_n-T_nu_n\|\to 0.
\end{equation}

Furthermore, from \eqref{gineq} and the fact that each term in the convex combination is not larger than $\phi(p,s_n)$, we obtain
\begin{equation}\label{conv_phi_terms}
\phi(p,z_n)\to \phi(p,x^{*}),\qquad \phi(p,u_n)\to \phi(p,x^{*}).
\end{equation}
Using Lemmas~\ref{EPrezolvent} and \ref{VIrezolvent}, we have
\[
\phi(z_n,s_n)\leq \phi(p,s_n)-\phi(p,z_n),
\]
and
\[
\phi(u_n,s_n)\leq \phi(p,s_n)-\phi(p,u_n).
\]
By \eqref{snxstar} and \eqref{conv_phi_terms}, the right hand sides tend to zero. Thus
\[
\phi(z_n,s_n)\to 0,
\qquad
\phi(u_n,s_n)\to 0.
\]
By Lemma~\ref{phitozero},
\begin{equation}\label{unznsn}
\|z_n-s_n\|\to 0,
\qquad
\|u_n-s_n\|\to 0.
\end{equation}
Combining \eqref{snxstar} and \eqref{unznsn}, we obtain
\begin{equation}\label{unzconv}
z_n\to x^{*},
\qquad
u_n\to x^{*}.
\end{equation}
Since $J$ is uniformly continuous on bounded subsets of $E$, \eqref{unznsn} implies
\[
\|Ju_n-Js_n\|\to 0.
\]
Together with \eqref{residual1}, we get
\begin{equation}\label{residual2}
\|Ju_n-T_nu_n\|\leq \|Ju_n-Js_n\|+\|Js_n-T_nu_n\|\to 0.
\end{equation}

\noindent {\bf Step 5.} We prove that $x^{*}\in F_J(\Gamma)$.

From \eqref{unzconv}, $u_n\to x^{*}$. From \eqref{residual2},
\[
\|Ju_n-T_nu_n\|\to 0.
\]
Since $\{T_n\}$ satisfies the NST-condition with $\Gamma$, we obtain
\begin{equation}\label{NSTuse}
\|Ju_n-Tu_n\|\to 0,
\qquad \forall T\in \Gamma.
\end{equation}
Let $T\in\Gamma$. Since $u_n\to x^{*}$ and $T$ is $J_{*}$-closed, \eqref{NSTuse} implies that $Tx^{*}=Jx^{*}$. Hence $x^{*}\in F_J(T)$ for every $T\in\Gamma$. Therefore
\begin{equation}\label{fixedmembership}
x^{*}\in F_J(\Gamma).
\end{equation}

\noindent {\bf Step 6.} We prove that
\[
x^{*}\in \left[\bigcap_{i=1}^{\infty}EP(f_i)\right]\cap\left[\bigcap_{j=1}^{\infty}VI(C,A_j)\right].
\]

We first show that $x^{*}\in \bigcap_{j=1}^{\infty}VI(C,A_j)$. From the definition of $u_n$,
\begin{equation}\label{VIbasic}
\langle y-u_n,A_nu_n\rangle+\frac{1}{r_n}\langle y-u_n,Ju_n-Js_n\rangle\geq 0,
\qquad \forall y\in C.
\end{equation}
Since $r_n\geq a>0$ and $\|Ju_n-Js_n\|\to 0$, we have
\begin{equation}\label{VIfraction}
\frac{\|Ju_n-Js_n\|}{r_n}\to 0.
\end{equation}
Fix $j\in\mathbb{N}$. Since $\nu^{-1}(j)$ is infinite, choose a subsequence $\{n_m\}$ such that $A_{n_m}=A_j$ for all $m\geq 1$. From \eqref{VIbasic},
\[
\langle y-u_{n_m},A_ju_{n_m}\rangle+\frac{1}{r_{n_m}}\langle y-u_{n_m},Ju_{n_m}-Js_{n_m}\rangle\geq 0,
\qquad \forall y\in C.
\]
For $t\in(0,1]$ and $y\in C$, set $v_t=ty+(1-t)x^{*}$. Since $C$ is convex, $v_t\in C$. Using monotonicity of $A_j$, we obtain
\[
\langle v_t-u_{n_m},A_jv_t\rangle\geq -\frac{1}{r_{n_m}}\langle v_t-u_{n_m},Ju_{n_m}-Js_{n_m}\rangle.
\]
Letting $m\to\infty$ and using \eqref{unzconv} and \eqref{VIfraction}, we get
\[
\langle v_t-x^{*},A_jv_t\rangle\geq 0.
\]
Since $v_t-x^{*}=t(y-x^{*})$, it follows that
\[
\langle y-x^{*},A_jv_t\rangle\geq 0.
\]
Letting $t\downarrow 0$ and using the continuity of $A_j$, we obtain
\[
\langle y-x^{*},A_jx^{*}\rangle\geq 0,
\qquad \forall y\in C.
\]
Thus $x^{*}\in VI(C,A_j)$. Since $j\in\mathbb{N}$ was arbitrary,
\begin{equation}\label{VIall}
x^{*}\in \bigcap_{j=1}^{\infty}VI(C,A_j).
\end{equation}

We now show that $x^{*}\in \bigcap_{i=1}^{\infty}EP(f_i)$. From the definition of $z_n$,
\begin{equation}\label{EPbasic}
f_n(Jz_n,Jy)+\frac{1}{r_n}\langle y-z_n,Jz_n-Js_n\rangle\geq 0,
\qquad \forall y\in C.
\end{equation}
Since $r_n\geq a>0$ and $\|Jz_n-Js_n\|\to 0$, we have
\begin{equation}\label{EPfraction}
\frac{\|Jz_n-Js_n\|}{r_n}\to 0.
\end{equation}
Fix $i\in\mathbb{N}$. Since $\mu^{-1}(i)$ is infinite, choose a subsequence $\{m_q\}$ such that $f_{m_q}=f_i$ for all $q\geq 1$. From \eqref{EPbasic} and the monotonicity condition $(A2)$,
\[
\frac{1}{r_{m_q}}\langle y-z_{m_q},Jz_{m_q}-Js_{m_q}\rangle\geq -f_i(Jz_{m_q},Jy)\geq f_i(Jy,Jz_{m_q}).
\]
Letting $q\to\infty$ and using \eqref{unzconv}, \eqref{EPfraction} and the lower semicontinuity of $f_i(Jy,\cdot)$, we obtain
\[
f_i(Jy,Jx^{*})\leq 0,
\qquad \forall y\in C.
\]
For $t\in(0,1]$ and $y\in C$, put
\[
y_t^{*}=tJy+(1-t)Jx^{*}.
\]
Since $JC$ is convex, $y_t^{*}\in JC$. Hence $f_i(y_t^{*},Jx^{*})\leq 0$. By $(A1)$ and the convexity of the second argument,
\[
0=f_i(y_t^{*},y_t^{*})\leq t f_i(y_t^{*},Jy)+(1-t)f_i(y_t^{*},Jx^{*})\leq t f_i(y_t^{*},Jy).
\]
Thus $f_i(y_t^{*},Jy)\geq 0$. Letting $t\downarrow 0$ and using $(A3)$, we obtain
\[
f_i(Jx^{*},Jy)\geq 0,
\qquad \forall y\in C.
\]
Therefore $x^{*}\in EP(f_i)$. Since $i\in\mathbb{N}$ was arbitrary,
\begin{equation}\label{EPall}
x^{*}\in \bigcap_{i=1}^{\infty}EP(f_i).
\end{equation}
Combining \eqref{fixedmembership}, \eqref{VIall} and \eqref{EPall}, we have
\begin{equation}\label{xinstarB}
x^{*}\in B.
\end{equation}

\noindent {\bf Step 7.} We show that $x^{*}=\Pi_Bx_1$.

Since $B$ is nonempty, closed and convex, $\Pi_Bx_1$ exists. By the defining property of the generalized projection,
\begin{equation}\label{PBineq1}
\phi(\Pi_Bx_1,x_1)\leq \phi(x^{*},x_1).
\end{equation}
Since $B\subset C_n$ for every $n\geq 1$ and $x_n=\Pi_{C_n}x_1$, we also have
\[
\phi(x_n,x_1)\leq \phi(\Pi_Bx_1,x_1),
\qquad n\geq 1.
\]
Letting $n\to\infty$, we obtain
\begin{equation}\label{PBineq2}
\phi(x^{*},x_1)\leq \phi(\Pi_Bx_1,x_1).
\end{equation}
From \eqref{PBineq1} and \eqref{PBineq2},
\[
\phi(x^{*},x_1)=\phi(\Pi_Bx_1,x_1).
\]
Since $x^{*}\in B$ and the generalized projection is unique, we conclude that
\[
x^{*}=\Pi_Bx_1.
\]
This completes the proof.
\end{proof}

\section{Applications and consequences}

\begin{proposition}[Residual convergence]\label{residualprop}
Under the assumptions of Theorem~\ref{mainthm}, the auxiliary sequences generated by \eqref{algorithm} satisfy
\[
\|s_n-x_n\|\to 0,
\qquad
\|y_n-x_{n+1}\|\to 0,
\qquad
\|z_n-s_n\|\to 0,
\qquad
\|u_n-s_n\|\to 0,
\]
and
\[
\|Ju_n-T_nu_n\|\to 0.
\]
Consequently,
\[
s_n\to \Pi_Bx_1,
\qquad y_n\to \Pi_Bx_1,
\qquad z_n\to \Pi_Bx_1,
\qquad u_n\to \Pi_Bx_1.
\]
\end{proposition}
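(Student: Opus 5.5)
This proposition is essentially a repackaging of the intermediate conclusions already obtained in the proof of Theorem~\ref{mainthm}. The plan is to read off each limit from the corresponding step there, and then combine them with the identification $x^{*}=\Pi_Bx_1$ established in Step 7. No new estimate is required; the work is to check that each residual statement was derived from the hypotheses of Theorem~\ref{mainthm} alone and not from any additional assumption.

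First, $\|s_n-x_n\|\to 0$ is exactly \eqref{snconv}. It follows from the boundedness of $\{x_n\}$ (Step 2), which makes $\{h(x_n)\}$ bounded, together with $\lambda_n,\sigma_n\to 0$ and the uniform continuity of $J^{-1}$ on bounded subsets of $E^{*}$.

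Second, $\|y_n-x_{n+1}\|\to 0$ follows, as in Step 3, from $x_{n+1}\in C_{n+1}$. This gives $\phi(x_{n+1},y_n)\le\phi(x_{n+1},s_n)$. The right-hand side tends to zero because $x_{n+1}$ and $s_n$ both converge to $x^{*}$ and $\phi$ is continuous in both variables, since $J$ is norm-continuous. Lemma~\ref{phitozero} then applies, because $\{x_{n+1}\}$ is bounded.

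Third, $\|z_n-s_n\|\to 0$ and $\|u_n-s_n\|\to 0$ are \eqref{unznsn}. They come from the resolvent inequalities of Lemmas~\ref{EPrezolvent} and \ref{VIrezolvent}, $\phi(z_n,s_n)\le\phi(p,s_n)-\phi(p,z_n)$ and the analogue for $u_n$, once we know $\phi(p,z_n),\phi(p,u_n)\to\phi(p,x^{*})$.

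Fourth, $\|Ju_n-T_nu_n\|\to 0$ is \eqref{residual2}. It uses Lemma~\ref{glemma} to obtain $\|Js_n-T_nu_n\|\to 0$, and then the uniform continuity of $J$ on bounded sets.

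For the ``consequently'' part, I would use $x_n\to x^{*}$ and $x^{*}=\Pi_Bx_1$ from Steps 2 and 7. Then $s_n\to\Pi_Bx_1$ follows from $\|s_n-x_n\|\to0$, and $y_n\to\Pi_Bx_1$ from $\|y_n-x_{n+1}\|\to 0$ and $x_{n+1}\to\Pi_Bx_1$. Finally, $z_n,u_n\to\Pi_Bx_1$ follow from the triangle inequality with $s_n$.

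The only delicate point is the passage $\phi(p,z_n)\to\phi(p,x^{*})$ in \eqref{conv_phi_terms}, which the proof of Theorem~\ref{mainthm} asserts briefly. I would justify it explicitly as follows. From the convexity estimate and $\phi(p,z_n),\phi(p,(J_*\circ T_n)u_n)\le\phi(p,s_n)$ we have
\[
\alpha_2\big(\phi(p,s_n)-\phi(p,z_n)\big)\le \phi(p,s_n)-\phi(p,y_n)\to 0 .
\]
Since $\alpha_2>0$, this gives $\phi(p,s_n)-\phi(p,z_n)\to 0$. For $u_n$ I would use the chain $\phi(p,(J_*\circ T_n)u_n)\le\phi(p,u_n)\le\phi(p,s_n)$ together with the coefficient $\alpha_3>0$, which yields $\phi(p,s_n)-\phi(p,u_n)\to 0$. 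These two differences are exactly what the resolvent inequalities require, so the argument closes without needing the limits of $\phi(p,z_n)$ and $\phi(p,u_n)$ individually.
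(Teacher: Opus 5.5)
Your proposal is correct and takes the same approach as the paper, whose proof simply reads these residual limits off Steps 3 and 4 of Theorem~\ref{mainthm} and uses $x^{*}=\Pi_Bx_1$ from Step 7. Your explicit derivation of $\phi(p,s_n)-\phi(p,z_n)\to 0$ and $\phi(p,s_n)-\phi(p,u_n)\to 0$, using the positive weights $\alpha_2$ and $\alpha_3$, is a valid way to fill in the step the paper states only tersely in \eqref{conv_phi_terms}.
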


\begin{proof}
These conclusions are precisely the residual estimates established in Steps 3 and 4 of the proof of Theorem~\ref{mainthm}.
\end{proof}

\begin{corollary}[Generalized-projection variational characterization of the selected limit]\label{limitVIchar}
Let the assumptions of Theorem~\ref{mainthm} hold and set
\[
q:=\Pi_Bx_1.
\]
Then $q$ is the unique point in $B$ satisfying
\begin{equation}\label{limitVI}
\langle p-q,Jx_1-Jq\rangle\leq 0,
\qquad \forall p\in B.
\end{equation}
Equivalently, the limit selected by the algorithm is characterized by the generalized projection variational inequality on the full common solution set $B$.
\end{corollary}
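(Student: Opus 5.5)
The plan is to deduce the corollary directly from the generalized projection inequality, Lemma~\ref{projineq}, applied to the set $B$. Theorem~\ref{mainthm} assumes $B$ is nonempty, closed and convex, and $E$ is uniformly smooth and uniformly convex. Hence $E$ is smooth, strictly convex and reflexive, so $\Pi_Bx_1$ exists and is unique (Definition~\ref{gprojdef}), and Lemma~\ref{projineq} applies with $C$ replaced by $B$ and $x=x_1$.

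\textbf{Existence.} Take $z=q=\Pi_Bx_1$ in the ``only if'' direction of Lemma~\ref{projineq}. This gives $\langle p-q,Jx_1-Jq\rangle\le 0$ for all $p\in B$, which is \eqref{limitVI}.

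\textbf{Uniqueness.} Suppose $q'\in B$ also satisfies \eqref{limitVI}. The ``if'' direction of Lemma~\ref{projineq} then yields $q'=\Pi_Bx_1=q$.

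If one prefers not to lean on the equivalence in that lemma, the same conclusion follows by hand. Apply \eqref{limitVI} to $q$ with $p=q'$, and to $q'$ with $p=q$, and add the two inequalities:
\[
\langle q'-q,Jx_1-Jq\rangle+\langle q-q',Jx_1-Jq'\rangle=\langle q-q',Jq-Jq'\rangle\le 0 .
\]
Monotonicity of $J$ gives $\langle q-q',Jq-Jq'\rangle\ge 0$, so equality holds. Strict convexity of $E$ makes $J$ strictly monotone, which forces $q=q'$.

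\textbf{Identification with the limit.} By Theorem~\ref{mainthm}, $x_n\to q$. So the point characterized by \eqref{limitVI} is exactly the limit selected by the algorithm, which is the ``equivalently'' clause.

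There is no real obstacle here. The only point needing care is checking that the hypotheses of Lemma~\ref{projineq} hold on $B$ rather than on $C$; this is exactly why closedness and convexity of $B$ are assumed in Theorem~\ref{mainthm}. Strict monotonicity of $J$, used only in the optional hand argument, follows from strict convexity of $E$.
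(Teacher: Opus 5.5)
Your proposal is correct and follows essentially the paper's proof: apply Lemma~\ref{projineq} on $B$ with $x=x_1$ in both directions, and get uniqueness from the uniqueness of the generalized projection. Your optional hand argument is also valid (adding the two inequalities and using the strict monotonicity of $J$ in a strictly convex space), and the identification with the limit via Theorem~\ref{mainthm} correctly covers the ``equivalently'' clause.
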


\begin{proof}
Since $B$ is nonempty, closed and convex, the generalized projection $\Pi_Bx_1$ is well defined. By Lemma~\ref{projineq}, $q=\Pi_Bx_1$ if and only if \eqref{limitVI} holds. The uniqueness follows from the uniqueness of the generalized projection in uniformly smooth and uniformly convex Banach spaces.
\end{proof}

\begin{corollary}\label{singlecorollary}
Let the assumptions of Theorem~\ref{mainthm} hold. Suppose that $T_n=T$ for all $n\geq 1$, $A_j=A$ for all $j\geq 1$, and $f_i=f$ for all $i\geq 1$. Assume that $T$ is generalized $J_{*}$-nonexpansive and $J_{*}$-closed, and that
\[
B=F_J(T)\cap EP(f)\cap VI(C,A)
\]
is nonempty, closed and convex. Then the sequence $\{x_n\}$ generated by \eqref{algorithm} converges strongly to $\Pi_Bx_1$.
\end{corollary}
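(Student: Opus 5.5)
The plan is to obtain Corollary~\ref{singlecorollary} as a direct specialization of Theorem~\ref{mainthm}. I will check its hypotheses one by one for the constant families $T_n=T$, $A_j=A$, $f_i=f$, with $\Gamma:=\{T\}$. Then I will note that the set $B$ of the theorem coincides with the set $B$ of the corollary.

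First, the structural hypotheses. Each $f_i=f$ satisfies (A1)--(A4), and each $A_j=A$ is continuous and monotone; both are inherited from the standing assumptions of Theorem~\ref{mainthm}. Each $T_n=T$ is generalized $J_{*}$-nonexpansive. The family $\Gamma=\{T\}$ consists of a $J_{*}$-closed, generalized $J_{*}$-nonexpansive map. Moreover,
\[
\bigcap_{n=1}^{\infty}F_J(T_n)=F_J(T)=F_J(\Gamma),
\]
and this set is nonempty by Definition~\ref{GenJstar}, since a generalized $J_{*}$-nonexpansive map has a $J$-fixed point by definition. The index maps $\mu,\nu$ play no role here, because every selected $f_n$ equals $f$ and every selected $A_n$ equals $A$. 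The algorithm \eqref{algorithm} therefore runs with fixed resolvents.

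Second, the NST-condition, which is the only step requiring a moment's thought. Let $\{x_n\}\subset C$ be bounded with $\|Jx_n-T_nx_n\|\to 0$. Since $T_n=T$, this is literally the statement $\|Jx_n-Tx_n\|\to 0$. Because $\Gamma=\{T\}$, this gives $\|Jx_n-Sx_n\|\to 0$ for all $S\in\Gamma$. So the NST-condition holds trivially.

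Third, the sets. Since $\bigcap_i EP(f_i)=EP(f)$ and $\bigcap_j VI(C,A_j)=VI(C,A)$, the common solution set of Theorem~\ref{mainthm} equals $F_J(T)\cap EP(f)\cap VI(C,A)$. This set is assumed nonempty, closed and convex. Theorem~\ref{mainthm} then yields that $\{x_n\}$ is well defined and converges strongly to $\Pi_Bx_1$.

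I expect no genuine obstacle. The only point deserving care is the choice $\Gamma=\{T\}$, which must be made so that both the fixed-point-set identity and the NST-condition become tautological.
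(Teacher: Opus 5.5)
Your proposal is correct and follows the same route as the paper. The paper also specializes Theorem~\ref{mainthm} to the constant families, notes that the NST-condition becomes automatic, and observes that the common solution set reduces to $F_J(T)\cap EP(f)\cap VI(C,A)$; you make the choice $\Gamma=\{T\}$ explicit where the paper leaves it implicit, and since any admissible $\Gamma$ satisfies $F_J(\Gamma)=F_J(T)$, that choice is harmless and gives the same set $B$.
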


\begin{proof}
In this case all selected equilibrium and variational inequality problems are identical, and $T_n=T$ for all $n\geq 1$. Hence the NST-condition is automatic and the conclusion follows from Theorem~\ref{mainthm}.
\end{proof}

\begin{corollary}\label{noEPVIcorollary}
Let $E$, $C$, $T_n$, $\Gamma$, $h$ and $\bar u$ satisfy the fixed point assumptions of Theorem~\ref{mainthm}. Suppose that
\[
B=F_J(\Gamma)
\]
is nonempty, closed and convex. Then the viscosity-Halpern hybrid sequence obtained from \eqref{algorithm} by suppressing the equilibrium and variational inequality steps converges strongly to $\Pi_Bx_1$.
\end{corollary}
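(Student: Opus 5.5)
The plan is to rerun the proof of Theorem~\ref{mainthm} with the equilibrium and variational inequality steps removed, and check that every step still goes through. Concretely, the suppressed scheme is
\[
s_n=J^{-1}\big(\lambda_nJh(x_n)+\sigma_nJ\bar u+(1-\lambda_n-\sigma_n)Jx_n\big),\qquad
y_n=J^{-1}\big(\beta Js_n+(1-\beta)T_ns_n\big),
\]
with $\beta\in(0,1)$ fixed, followed by $C_{n+1}=\{v\in C_n:\phi(v,y_n)\leq\phi(v,s_n)\}$ and $x_{n+1}=\Pi_{C_{n+1}}x_1$. One could equally keep three weights and set $z_n=u_n=s_n$; the two-weight form is simpler to write down.

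\textbf{Step 1 (well-definedness and $B\subset C_n$).} For $p\in B=F_J(\Gamma)=\bigcap_n F_J(T_n)$, the generalized $J_*$-nonexpansiveness of $T_n$ gives $\phi(p,J_*T_ns_n)\leq\phi(p,s_n)$. Convexity of $\|\cdot\|^2$ in $E^*$ then gives $\phi(p,y_n)\leq\phi(p,s_n)$. The half-space description shows that each $C_{n+1}$ is closed and convex, and $B\subset C_{n+1}$ makes it nonempty, so the induction from Step~1 of Theorem~\ref{mainthm} carries over verbatim.

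\textbf{Steps 2 and 3.} These use only the projection structure and \eqref{lambdacond}, so they transfer unchanged. They give $x_n\to x^*$, $s_n\to x^*$ and $y_n\to x^*$.

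\textbf{Step 4 (main obstacle).} This is the only step where the argument genuinely changes, because the residual must now be controlled at $s_n$ rather than at $u_n$. Apply Lemma~\ref{glemma} in $E^*$ with $N=2$ to the bounded sequences $Js_n$ and $T_ns_n$. This gives
\[
\phi(p,y_n)\leq \phi(p,s_n)-\beta(1-\beta)g(\|Js_n-T_ns_n\|).
\]
The left-hand side and $\phi(p,s_n)$ both tend to $\phi(p,x^*)$, so $g(\|Js_n-T_ns_n\|)\to0$, and hence $\|Js_n-T_ns_n\|\to0$. The boundedness of $T_ns_n$ needed here follows from $\|J_*T_ns_n\|$ being controlled through $\phi(p,J_*T_ns_n)\leq\phi(p,s_n)$ together with \eqref{phibounds}.

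\textbf{Step 5.} Since $\{s_n\}$ is bounded, the NST-condition yields $\|Js_n-Ts_n\|\to0$ for every $T\in\Gamma$. From $s_n\to x^*$ we get $Js_n\to Jx^*$, so $J_*Ts_n\to x^*$ by continuity of $J_*$. The $J_*$-closedness of $T$ then gives $Tx^*=Jx^*$, that is, $x^*\in F_J(\Gamma)=B$.

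\textbf{Step 7.} Step~6 is not needed. Step~7 of Theorem~\ref{mainthm} applies verbatim: $\phi(x_n,x_1)\leq\phi(\Pi_Bx_1,x_1)$, so in the limit $\phi(x^*,x_1)\leq\phi(\Pi_Bx_1,x_1)$. Uniqueness of the generalized projection then gives $x^*=\Pi_Bx_1$.
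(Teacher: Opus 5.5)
Your argument is correct, but it takes a different route from the paper's.

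The paper proves the corollary by specialization. It takes $f_i\equiv 0$ and $A_j\equiv 0$, notes that $EP(f_i)=VI(C,A_j)=C$ so that $B=F_J(\Gamma)$, and invokes Theorem~\ref{mainthm} directly. Implicitly, with these choices both resolvent conditions reduce to $\langle y-z,Js_n-Jz\rangle\leq 0$ for all $y\in C$, that is, $z_n=u_n=\Pi_Cs_n$. Since $Js_n$ is a convex combination of points of $JC$, one has $s_n\in C$, hence $z_n=u_n=s_n$ and $y_n=J^{-1}\big((\alpha_1+\alpha_2)Js_n+\alpha_3T_ns_n\big)$. This is exactly your two-weight scheme with $\beta=\alpha_1+\alpha_2$, and every $\beta\in(0,1)$ arises this way.

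You instead re-run the proof in this special case. Steps 1--3 and 7 transfer, Step 6 disappears, and Step 4 becomes a two-point application of Lemma~\ref{glemma}. That yields the residual $\|Js_n-T_ns_n\|\to 0$ directly at $s_n$, with no resolvent estimates. The reduction buys brevity and avoids re-verification. Your version is self-contained and makes explicit what ``suppressing'' the steps means. It also supplies details the paper leaves tacit: boundedness of $\{T_ns_n\}$ via \eqref{phibounds}, and continuity of $J_*$ before invoking $J_*$-closedness.

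The one point you should add is that $s_n\in C$, which follows from convexity of $JC$. Without it, $T_ns_n$ is undefined, and neither the NST-condition nor $J_*$-closedness (both of which concern points of $C$) applies to $\{s_n\}$. The paper's reduction silently uses the same fact to identify $z_n$ and $u_n$ with $s_n$.
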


\begin{proof}
Take the bifunctions $f_i$ to be identically zero and take $A_j=0$ for all $i,j\geq 1$. Then $EP(f_i)=C$ and $VI(C,A_j)=C$ for all $i,j\geq 1$, and the conclusion follows from Theorem~\ref{mainthm}.
\end{proof}

\begin{corollary}\label{hilbertcorollary}
Let $E=H$ be a real Hilbert space and let $C$ be a nonempty closed and convex subset of $H$. Let $\{f_i\}_{i=1}^{\infty}$ be a countable family of bifunctions from $C\times C$ into $\mathbb{R}$ satisfying $(A1)-(A4)$, let $\{A_j\}_{j=1}^{\infty}$ be a countable family of continuous monotone mappings from $C$ into $H$, and let $\{T_n\}_{n=1}^{\infty}$ be a countable family of nonexpansive-type mappings satisfying the corresponding NST-condition with $\Gamma$. Suppose that
\[
B=F(\Gamma)\cap\left[\bigcap_{i=1}^{\infty}EP(f_i)\right]\cap\left[\bigcap_{j=1}^{\infty}VI(C,A_j)\right],
\]
where \(F(\Gamma):=\bigcap_{T\in\Gamma}F(T)\). Assume that \(B\) is nonempty, closed and convex. Let \(\mu,\nu:\mathbb{N}\to\mathbb{N}\) be index maps whose fibers are infinite, and set \(f_n=f_{\mu(n)}\) and \(A_n=A_{\nu(n)}\). Then the sequence generated by
\[
\left\{
\begin{array}{ll}
s_n=\lambda_nh(x_n)+\sigma_n\bar u+(1-\lambda_n-\sigma_n)x_n,\\[1mm]
z_n=\left\{z\in C:f_n(z,y)+\frac{1}{r_n}\langle y-z,z-s_n\rangle\geq 0,\ \forall y\in C\right\},\\[1mm]
u_n=\left\{z\in C:\langle y-z,A_nz\rangle+\frac{1}{r_n}\langle y-z,z-s_n\rangle\geq 0,\ \forall y\in C\right\},\\[1mm]
y_n=\alpha_1s_n+\alpha_2z_n+\alpha_3T_nu_n,\\[1mm]
C_{n+1}=\{v\in C_n:\|v-y_n\|\leq \|v-s_n\|\},\\[1mm]
x_{n+1}=P_{C_{n+1}}x_1,
\end{array}
\right.
\]
converges strongly to $P_Bx_1$, where $P_B$ is the metric projection of $H$ onto $B$.
\end{corollary}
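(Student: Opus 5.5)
The plan is to deduce Corollary~\ref{hilbertcorollary} directly from Theorem~\ref{mainthm} by specializing $E=H$. A real Hilbert space is uniformly smooth and uniformly convex. Under the Riesz identification $H^{*}=H$, the normalized duality map is $J=I$, so $J_{*}=J^{-1}=I$ as well. Consequently $JC=C$, which is closed and convex, so the standing hypothesis on $JC$ in Theorem~\ref{mainthm} holds automatically. The bifunctions $f_i:C\times C\to\mathbb{R}$ are then exactly bifunctions on $JC\times JC$ satisfying (A1)--(A4). The mappings $A_j:C\to H=H^{*}$ are continuous and monotone in the sense required.

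The next step is to translate every object appearing in \eqref{algorithm}. Since $J=I$, the first line of \eqref{algorithm} becomes the convex-type combination $s_n=\lambda_nh(x_n)+\sigma_n\bar u+(1-\lambda_n-\sigma_n)x_n$. The resolvent definitions of $z_n$ and $u_n$ reduce verbatim to the displayed ones, and $y_n=\alpha_1s_n+\alpha_2z_n+\alpha_3T_nu_n$. Since $\phi(x,y)=\|x-y\|^2$, the condition $\phi(v,y_n)\le\phi(v,s_n)$ is equivalent to $\|v-y_n\|\le\|v-s_n\|$. Finally, the generalized projection $\Pi_K$ minimizes $\|y-x\|^2$ over $K$, so it coincides with the metric projection $P_K$. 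Hence the sequence defined in the corollary is exactly the sequence \eqref{algorithm} in this setting, and $\Pi_Bx_1=P_Bx_1$.

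It remains to check that the fixed-point hypotheses translate correctly. With $J=I$, a $J$-fixed point of $T:C\to H$ is an ordinary fixed point, so $F_J(T)=F(T)$ and $F_J(\Gamma)=F(\Gamma)$. Generalized $J_{*}$-nonexpansiveness becomes $\|p-Tx\|\le\|p-x\|$ for all $x\in C$ and $p\in F(T)$, i.e.\ quasi-nonexpansiveness. $J_{*}$-closedness becomes ordinary closedness of $T$, and the NST-condition becomes the usual one: $\|x_n-T_nx_n\|\to0$ implies $\|x_n-Tx_n\|\to0$ for all $T\in\Gamma$.

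The main obstacle is interpretive rather than technical. The phrase ``nonexpansive-type mappings satisfying the corresponding NST-condition'' must be read as precisely these translated hypotheses. That is, each $T_n$ is quasi-nonexpansive, $\Gamma$ consists of closed quasi-nonexpansive maps, and $\bigcap_n F(T_n)=F(\Gamma)\neq\emptyset$. I will state this reading explicitly; nonexpansive maps with nonempty fixed point sets are a special case, being quasi-nonexpansive and, by continuity, closed. The parameter conditions \eqref{lambdacond}, \eqref{alphacond} and $r_n\ge a>0$ are tacitly inherited. With all hypotheses of Theorem~\ref{mainthm} verified, strong convergence of $\{x_n\}$ to $\Pi_Bx_1=P_Bx_1$ follows immediately.
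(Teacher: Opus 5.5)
Your proposal is correct and follows the same route as the paper. The paper also specializes to $E=H$, so that $J=J_{*}=I$, $\phi(x,y)=\|x-y\|^{2}$ and $\Pi_{K}=P_{K}$, and then reads the corollary off from Theorem~\ref{mainthm}. Your explicit translation of the hypotheses ($JC=C$; generalized $J_{*}$-nonexpansive becomes quasi-nonexpansive; $J_{*}$-closed becomes closed; the parameter conditions are inherited) spells out what the paper's one-line proof and its phrase ``nonexpansive-type'' leave implicit.
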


\begin{proof}
In a Hilbert space, $J=I$ and $\phi(x,y)=\|x-y\|^2$. The generalized projection coincides with the metric projection. Hence the result follows directly from Theorem~\ref{mainthm}.
\end{proof}

\begin{remark}
The theorem and corollaries above are applicable in classical uniformly smooth and uniformly convex Banach spaces, such as $L_p$, $\ell_p$ and $W_p^m(\Omega)$, where $1<p<\infty$, whenever the common solution set is nonempty, closed and convex.
\end{remark}

\begin{proposition}\label{finite-truncation-prop}
The countable-family setting cannot, in general, be replaced by any finite truncation. In particular, there exist countably many variational inequality problems whose full common solution set is a singleton, while every finite truncation has an infinite-dimensional solution set.
\end{proposition}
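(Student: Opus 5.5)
The plan is to build an explicit example in the Hilbert space $H=\ell_2$ with $C=H$. Here $J=I$, $JC=C$ is closed and convex, and $VI(C,A)=\{x: Ax=0\}$ for any continuous monotone $A$, because $\langle y-x,Ax\rangle\ge 0$ for all $y\in H$ forces $Ax=0$.

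Let $\{e_k\}$ be the standard orthonormal basis and $P_k$ the orthogonal projection onto $\mathrm{span}\{e_k\}$. Define $A_j:=P_j$, that is, $A_jx=\langle x,e_j\rangle e_j$. Each $A_j$ is linear, bounded (hence continuous) and monotone, since $\langle A_jx-A_jy,x-y\rangle=|\langle x-y,e_j\rangle|^2\ge 0$. Consequently
\[
VI(C,A_j)=\{x\in\ell_2:\langle x,e_j\rangle=0\}=\{e_j\}^{\perp}.
\]

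For the full countable family,
\[
\bigcap_{j=1}^\infty VI(C,A_j)=\{x:\langle x,e_j\rangle=0\ \forall j\}=\{0\},
\]
which is a singleton. For a finite truncation $j\le N$,
\[
\bigcap_{j=1}^N VI(C,A_j)=\overline{\mathrm{span}}\{e_k:k>N\},
\]
which is a closed infinite-dimensional subspace. Its solution set therefore strictly contains the full one and never recovers it. This proves the "in particular" claim.

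To connect with the general statement, one can embed the example in Theorem~\ref{mainthm}. Take $f_i\equiv 0$, $T_n=I$ (so that $F_J(T_n)=H$) and $\Gamma=\{I\}$; the NST-condition is trivial. Then $B=\{0\}$, while every finite-truncation set $B_N$ is infinite-dimensional. Hence for a starting point such as $x_1=\sum_k 2^{-k}e_k$, the projections satisfy $P_{B_N}x_1=\sum_{k>N}2^{-k}e_k\neq 0=P_Bx_1$. So no finite truncation produces the same limit, although the truncated limits do converge to the true limit as $N\to\infty$. No step here is a real obstacle; the only care needed is to verify monotonicity and continuity of each $A_j$ and the identification of $VI$ with zero sets when $C=H$.
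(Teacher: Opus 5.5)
Your proposal is correct and follows the paper's argument: the same operators $A_jx=\langle x,e_j\rangle e_j$ on $\ell_2$ with $C=H$, the same identification $VI(H,A_j)=\{x:\langle x,e_j\rangle=0\}$, and the same comparison of the full intersection $\{0\}$ with the infinite-dimensional finite truncations. Two of your additions go beyond the paper and are valid: the explicit justification that $VI(H,A)$ is the zero set of $A$ (the paper only asserts this), and the embedding into Theorem~\ref{mainthm} with $f_i\equiv 0$, $T_n=I$, where $P_{B_N}x_1=\sum_{k>N}2^{-k}e_k\neq 0$.
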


\begin{proof}
Let $H=\ell_2$ and $C=H$. For each $j\geq 1$, define $A_j:H\to H$ by
\[
A_jx=x_j e_j,
\qquad x=(x_1,x_2,\ldots)\in \ell_2,
\]
where $\{e_j\}$ is the canonical orthonormal basis. Each $A_j$ is continuous and monotone. Moreover,
\[
VI(H,A_j)=\{x\in \ell_2:x_j=0\}.
\]
Therefore
\[
\bigcap_{j=1}^{\infty}VI(H,A_j)=\{0\},
\]
whereas for every finite $N$,
\[
\bigcap_{j=1}^{N}VI(H,A_j)=\{x\in \ell_2:x_1=\cdots=x_N=0\},
\]
which is infinite-dimensional. Hence no finite truncation recovers the full countable intersection.
\end{proof}

\begin{example}\label{l2example}
Let $H=\ell_2$, $C$ be the closed unit ball of $\ell_2$, and let $h(x)=\tau x$ for some $\tau\in(0,1)$. Set $\bar u=0$. Let $T_n=I$ for all $n\geq 1$. For each $j\geq 1$, let $A_jx=x_j e_j$, and for each $i\geq 1$ define
\[
f_i(x,y)=\langle x_i e_i,y-x\rangle,
\qquad x,y\in C.
\]
Then each $A_j$ is continuous and monotone, and each $f_i$ satisfies $(A1)-(A4)$. Moreover,
\[
EP(f_i)=\{x\in C:x_i=0\},
\qquad
VI(C,A_j)=\{x\in C:x_j=0\}.
\]
Thus the common solution set is $B=\{0\}$. If the index maps $\mu$ and $\nu$ visit every index infinitely often, Theorem~\ref{mainthm} implies that the viscosity-Halpern hybrid sequence converges strongly to $0$.
\end{example}

\section{Conclusion}

We introduced a viscosity-Halpern hybrid projection scheme for approximating a common element of the $J$-fixed point set of a countable family of generalized $J_{*}$-nonexpansive mappings, the solution sets of countably many variational inequality problems and the solution sets of countably many equilibrium problems in a uniformly smooth and uniformly convex real Banach space. The proposed algorithm first forms a viscosity-Halpern regularized point and then applies the selected equilibrium and variational inequality resolvent steps before the shrinking generalized projection step is performed.

The main theorem extends finite-family hybrid projection frameworks to a countable-family setting by using index maps which ensure that every component problem is visited infinitely often. We also give a generalized-projection variational characterization of the selected limit, residual convergence, single-problem and fixed point-only consequences, a Hilbert-space version, a comparison with related frameworks and examples showing that finite truncations do not generally recover the full countable problem. These results show that viscosity-Halpern regularization can be incorporated into hybrid projection methods for common solution problems involving countable equilibrium problems, countable variational inequalities and generalized nonexpansive-type mappings.

\end{document}